\numberwithin{equation}{section}
\numberwithin{figure}{section}
\theoremstyle{plain}
\newtheorem{thm}{\protect\theoremname}[section]
\theoremstyle{definition}
\newtheorem{defn}[thm]{\protect\definitionname}
\theoremstyle{definition}
\newtheorem{example}[thm]{\protect\examplename}
\theoremstyle{plain}
\newtheorem{cor}[thm]{\protect\corollaryname}
\providecommand{\corollaryname}{Corollary}
\providecommand{\definitionname}{Definition}
\providecommand{\examplename}{Example}
\providecommand{\theoremname}{Theorem}
\begin{document}

\title{Estimation of recurrence for nilpotent group action}
\begin{abstract}
We estimate size of recurrence of an action of a nilpotent group by
homeomorphisms of a compact space for polynomial mappings into a nilpotent
group form the partial semigroup $(\mathcal{P}_{f}(\mathbb{N}),\uplus)$.
To do this we have used algebraic structure of the Stone-\v{C}ech
copactification partial semigroup and that of the given nilpotent
group.
\end{abstract}

\author{Aninda Chakraborty}

\address{Department of Mathematics, University of Kalyani}

\email{anindachakraborty2@gmail.com}

\author{Dibyendu De}

\address{Department of Mathematics, University of Kalyani}

\email{dibyendude@gmail.com}

\author{Sayan Goswamy}

\address{Department of Mathematics, University of Kalyani}

\email{sayan92m@gmail.com}
\maketitle

\section{Introduction}

One of the earliest Ramsey theoretic result is celebrated van der
Waerden theorem on arithmetic progressions, which states that if the
set of integers is partitioned into finitely many classes then at
least one of the classes contains arbitrarily long arithmetic progressions.
\begin{thm}
Whenever we partition the set of integers into finitely many cells
then one of the cell will contain arbitrarily long arithmetic progression.
\end{thm}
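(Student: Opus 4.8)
The plan is to prove van der Waerden's theorem by the algebraic route through the Stone--\v{C}ech compactification $\beta\mathbb{N}$, which is the natural setting for the rest of the paper. Recall that $\beta\mathbb{N}$, the set of ultrafilters on $\mathbb{N}$, carries the extension of ordinary addition defined by $A \in p + q$ if and only if $\{n \in \mathbb{N} : A - n \in q\} \in p$, where $A - n = \{m : m + n \in A\}$. Under this operation $(\beta\mathbb{N}, +)$ is a compact right topological semigroup, so by the Ellis--Numakura lemma it contains idempotents, and, being compact, it possesses a smallest two-sided ideal $K(\beta\mathbb{N})$ which contains minimal idempotents. First I would fix one minimal idempotent $p$ and call a set $C \subseteq \mathbb{N}$ \emph{central} when $C \in p$.

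The partition step is then immediate: if $\mathbb{N} = \bigcup_{i=1}^{r} C_i$, then since $p$ is an ultrafilter exactly one cell $C_{i_0}$ belongs to $p$, so at least one cell of any finite partition is central. It therefore suffices to show that every central set contains arithmetic progressions of every finite length.

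For this I would invoke the Central Sets Theorem: if $C$ is central and $f_1, \ldots, f_k : \mathbb{N} \to \mathbb{Z}$ are finitely many sequences, then there are $a \in \mathbb{N}$ and a finite $H \subset \mathbb{N}$ with $a + \sum_{t \in H} f_i(t) \in C$ for every $i$. Given a target length $\ell$, I would apply this to the linear sequences $f_i(t) = i\,t$ for $i = 0, 1, \ldots, \ell - 1$. Writing $d = \sum_{t \in H} t$, the conclusion becomes $a + i\,d \in C$ for $i = 0, 1, \ldots, \ell - 1$, that is, $C$ contains the length-$\ell$ progression with first term $a$ and common difference $d$. Letting $\ell$ vary completes the argument.

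The genuinely substantial step is the Central Sets Theorem itself; the semigroup setup and the reduction to linear sequences are essentially bookkeeping. I expect the main obstacle to be establishing that every member of a minimal idempotent carries this combinatorial richness, which rests on the finer structure of the smallest ideal $K(\beta\mathbb{N})$ --- its minimal left ideals and the behaviour of idempotents under the order $p \leq q \iff p + q = q + p = p$ --- rather than on any direct manipulation of arithmetic progressions. Should one prefer to avoid the Central Sets Theorem, the Furstenberg--Weiss topological multiple-recurrence theorem applied to the subshift generated by the colouring gives an alternative route, but the algebraic argument above is closest in spirit to the methods developed in the sequel.
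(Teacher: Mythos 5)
The paper itself offers no proof of this statement: it is the classical van der Waerden theorem, quoted in the introduction purely as motivation and attributed to van der Waerden's original paper, so there is no internal argument to compare yours against. Judged on its own merits, your proof is correct. The partition step (one cell of any finite partition lies in a fixed minimal idempotent $p$, hence is central) is sound, and the reduction of long progressions to the Central Sets Theorem via $f_i(t)=i\,t$ for $i=0,1,\ldots,\ell-1$, with common difference $d=\sum_{t\in H}t$, is the standard derivation; it is not circular, since the known proofs of the Central Sets Theorem (Furstenberg's dynamical proof, or the algebraic proof in $\beta\mathbb{N}$) nowhere use van der Waerden. Two small points deserve mention. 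First, the statement concerns the integers while you work in $\mathbb{N}$; this is harmless --- a finite partition of $\mathbb{Z}$ restricts to a finite partition of $\mathbb{N}$, and a progression in $\mathbb{N}$ is a progression in $\mathbb{Z}$ --- but the bridge should be stated. Second, note the trade-off inherent in your route: you derive van der Waerden from a theorem that is considerably stronger and harder than van der Waerden itself, whereas the original proof is elementary double induction on length and number of colors. What your approach buys is exact alignment with the machinery of the rest of the paper, where minimal idempotents, piecewise syndetic sets, and the smallest ideal $K(\beta G)$ are precisely the working currency of Theorems 2.8 and 2.9; in that sense your proof is the natural one to record here, provided the Central Sets Theorem is cited honestly as an external input rather than presented as bookkeeping.
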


Furstenberg and Weiss \cite{key-9} offered a new approach, based
on the methods of topological dynamics, to results of this type.
\begin{thm}
Let $(X,d)$ be a compact metric space and let $T_{1},T_{2},\ldots,T_{k}$,
be commuting self-homeomorphisms of $X$: Then for any $\epsilon>0$
there exist $x\in X$ and $n\in\mathbb{N}$ such that $d(T_{i}^{n}x,x)<\epsilon$
for all $i=1,2,\ldots,k$.
\end{thm}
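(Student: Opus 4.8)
The plan is to prove the statement by the classical route of topological dynamics: first reduce to a minimal system, then induct on the number $k$ of commuting maps. Let $G=\langle T_1,\dots,T_k\rangle$ be the (abelian) group they generate, where ``$G$-invariant'' means invariant under every $T_i$ and every $T_i^{-1}$. By Zorn's lemma the family of nonempty closed $G$-invariant subsets of $X$, ordered by reverse inclusion, has a minimal element $Y$; restricting all the $T_i$ to $Y$, I may assume from the outset that $(X,G)$ is minimal. For $k=1$ the assertion is exactly Birkhoff's recurrence theorem: in a compact metric space a homeomorphism has a (uniformly) recurrent point, so $d(T_1^{n}x,x)<\epsilon$ for suitable $x,n$.

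For the inductive step I would assume the theorem for any $k-1$ commuting homeomorphisms of any compact metric space and introduce the \emph{difference homeomorphisms} $S_i:=T_iT_k^{-1}$ for $i=1,\dots,k-1$, which commute because the $T_j$ do. Applying the inductive hypothesis to $S_1,\dots,S_{k-1}$ yields a point $u'$ and an $n\ge 1$ with $d(S_i^{\,n}u',u')<\delta$ for $i\le k-1$; writing $u=T_k^{-n}u'$ and $z=T_k^{n}u=u'$ and using $S_i^{\,n}=T_i^{\,n}T_k^{-n}$, this rearranges to
\[
d(T_i^{\,n}u,z)<\delta\quad\text{for all }i=1,\dots,k,\qquad z=T_k^{\,n}u,
\]
the case $i=k$ being the trivial equality $T_k^{\,n}u=z$. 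Thus the inductive hypothesis buys me a point $u$ and a time $n$ for which \emph{all} the images $T_1^{\,n}u,\dots,T_k^{\,n}u$ cluster within $\delta$ of a single point $z$ (in particular within $2\delta$ of one another).

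The hard part will be upgrading ``all images are close to a common point $z$'' to the genuine conclusion ``all images are close to the base point $x$.'' The naive idea of transporting the configuration by a group element $g$ that moves $z$ near $u$ fails, because the relevant modulus of continuity of $T_k^{\,n}$ (and of such a $g$) depends on the variable exponent $n$, so the estimates do not survive the transport uniformly. To circumvent this I would run a compactness/chaining argument: using minimality to relocate configurations and the clustering estimate above, inductively build a sequence of points $x_0,x_1,x_2,\dots$ together with times, each obtained so that the $T_i$-images at the chosen time are uniformly close to the previous point; by compactness of $X$ two terms $x_p,x_q$ of this chain must come within $\epsilon$ of each other, and composing the corresponding group elements over the intermediate block produces a single point $x$ and a single exponent $n$ with $d(T_i^{\,n}x,x)<\epsilon$ simultaneously for all $i$. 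Controlling the accumulation of errors along the chain, and keeping the relocations compatible across the varying exponents, is the delicate point that the whole argument hinges on.

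Finally, I would note that, in keeping with the algebraic machinery used elsewhere in this paper, the recurrence can instead be packaged through a minimal idempotent $p$ of the semigroup $(\beta\mathbb{N},+)$: one passes to the product system on $X^{k}$ with $S=T_1\times\cdots\times T_k$, uses the diagonal $\Delta=\{(x,\dots,x)\}$ together with the diagonal copies of the $T_j$ to force a uniformly recurrent point onto $\Delta$, and then reads off multiple recurrence from the dynamical characterization of minimal idempotents $p\text{-}\lim_n S^{\,n}\bar x=\bar x$. This avoids the explicit chaining but transfers the same difficulty into verifying that $\Delta$ carries a $p$-recurrent point.
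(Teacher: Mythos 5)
The paper never proves this statement: it appears in the introduction as a quoted classical theorem of Furstenberg and Weiss (the paper's reference [FW]), serving only as motivation for the nilpotent, IP-polynomial strengthenings (Theorems \ref{alkgeb nil} and \ref{nil top}) that the paper actually proves via ultrafilter algebra. So the only meaningful comparison is with the classical proof, and your proposal is essentially a correct reconstruction of it: reduction to a $G$-minimal system via Zorn's lemma, induction on $k$ with Birkhoff recurrence as the base case, the difference maps $S_i=T_iT_k^{-1}$ to produce a configuration $T_1^nu,\ldots,T_k^nu$ clustered near a single point $z$, and then a relocation-plus-chaining compactness argument to convert ``near some point'' into ``near the base point itself.'' Two remarks on the step you flag as delicate. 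First, the relocation is not actually obstructed by the $n$-dependence of moduli of continuity: by minimality and compactness one covers $X$ by finitely many sets $g_j^{-1}B(y_0,\epsilon/2)$ with $g_j\in G$, and fixes a common modulus of continuity for these finitely many $g_j$ \emph{before} the configuration (hence before $n$) is produced; since each $g_j$ commutes with every $T_i$, transporting by $g_j$ preserves the shape of the configuration, which yields the intermediate statement ``for every target $y_0$ there exist $x$ and $n$ with $d(T_i^nx,y_0)<\epsilon$ for all $i$.'' Second, with that statement in hand, your chain $x_0,x_1,x_2,\ldots$ with times $n_1,n_2,\ldots$, choosing each new error small relative to the moduli of the finitely many already-fixed maps $T_i^{n_{p+1}+\cdots+n_k}$, plus pigeonhole on a convergent subsequence of the $x_j$, closes the argument exactly as you describe; this is standard bookkeeping, not a gap. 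The trade-off between your route and the paper's: your argument is elementary and self-contained, while the paper's Stone--\v{C}ech machinery, had it been invoked here, would deliver this theorem as a very special case of Theorem \ref{nil top}, with the much stronger conclusions that the group may be nilpotent, the iterates polynomial, and the set of return times IP$^*$ rather than merely nonempty.
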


An IP version of the above theorem was also presented in \cite{key-9}.
\begin{thm}
Let $T_{\alpha\in\mathcal{F}}^{(1)},T_{\alpha\in\mathcal{F}}^{(2)},\ldots,T_{\alpha\in\mathcal{F}}^{(k)}$
be an IP-systems in G of commuting self-homeomorphisms of a compact
metric space $(X,d)$. Then for any $\epsilon>0$ there exists $x\in X$
and $n\in\mathbb{N}$ such that $d(T_{\alpha\in\mathcal{F}}^{(i)}x,x)<\epsilon$
for all $i=1,2,\ldots,k$.
\end{thm}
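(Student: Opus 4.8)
The plan is to run the whole argument inside the Stone-\v{C}ech compactification of the partial semigroup $(\mathcal{F},\uplus)$, where $\mathcal{F}$ is the family of finite nonempty subsets of $\mathbb{N}$ and $\alpha\uplus\beta$ is the disjoint union (defined only when $\alpha\cap\beta=\emptyset$); here I read the index ``$n$'' in the conclusion as an $\alpha\in\mathcal{F}$, so that $T^{(i)}_{\alpha}$ refers to the IP-system. First I would pass to the set $\delta\mathcal{F}=\bigcap_{\alpha\in\mathcal{F}}\overline{\{\beta\in\mathcal{F}:\alpha\cap\beta=\emptyset\}}$ inside $\beta\mathcal{F}$, which is a compact right topological semigroup under the continuous extension of $\uplus$, and use the Ellis--Numakura lemma to fix an idempotent $p=p\uplus p$. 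The role of $p$ is to turn the defining IP relation $T_{\alpha\uplus\beta}=T_{\alpha}T_{\beta}$ into honest recurrence.

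I would first dispose of the case $k=1$. Fix any $x_{0}\in X$ and put $y=p\text{-}\lim_{\alpha}T_{\alpha}x_{0}$, which exists by compactness of $X$. Continuity of each $T_{\alpha}$ together with the IP relation (applied to $\beta$ disjoint from the fixed $\alpha$, of which $p$ sees cofinally many, since $p\in\delta\mathcal{F}$) gives $T_{\alpha}y=p\text{-}\lim_{\beta}T_{\alpha\uplus\beta}x_{0}$, and hence
\[
p\text{-}\lim_{\alpha}T_{\alpha}y=(p\uplus p)\text{-}\lim_{\gamma}T_{\gamma}x_{0}=p\text{-}\lim_{\gamma}T_{\gamma}x_{0}=y,
\]
so that $\{\alpha:d(T_{\alpha}y,y)<\epsilon\}\in p$ is in particular nonempty. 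Forming the diagonal system $\tilde{T}_{\alpha}=T^{(1)}_{\alpha}\times\cdots\times T^{(k)}_{\alpha}$ on $X^{k}$, which is again a commuting IP-system, the same computation produces a point $\bar{y}=(y^{(1)},\dots,y^{(k)})$ with $p\text{-}\lim_{\alpha}\tilde{T}_{\alpha}\bar{y}=\bar{y}$; this already yields simultaneous recurrence, but only at coordinates $y^{(i)}$ that need not coincide.

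The main obstacle is therefore to force a single base point $x$, and this is where I would use both the group structure and minimality. Replacing $X$ by a minimal closed subset invariant under the abelian group $G=\langle T^{(i)}_{\alpha}:1\le i\le k,\ \alpha\in\mathcal{F}\rangle$, I would induct on $k$ via the commuting IP-systems $S^{(i)}_{\alpha}=T^{(i)}_{\alpha}\bigl(T^{(k)}_{\alpha}\bigr)^{-1}$ for $i<k$: the inductive hypothesis gives $z$ and $\alpha$ with $d(S^{(i)}_{\alpha}z,z)$ small, and then $x=\bigl(T^{(k)}_{\alpha}\bigr)^{-1}z$ makes $T^{(1)}_{\alpha}x,\dots,T^{(k)}_{\alpha}x$ all close to one another. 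Promoting ``mutually close'' to ``close to $x$'' is the delicate point; the idempotent $p$ is what carries the argument through, since it keeps the set of admissible return times inside $p$ (hence IP-rich and nonempty) at every stage of the induction, so that in the minimal system the diagonal point is uniformly recurrent and its return times meet the good set. Tracking that the relevant sets stay members of $p$ throughout the induction is the step I expect to require the most care.
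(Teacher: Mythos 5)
The paper itself never proves this statement: it is quoted as background from Furstenberg--Weiss \cite{key-9}, and the closest thing in the paper to a proof is the ultrafilter argument for Theorem \ref{alkgeb nil}, of which this theorem is the linear, abelian special case. Measured against that, the sound parts of your proposal are the first stages: the passage to $\delta\mathcal{F}$ and an idempotent $p=p\uplus p$, the computation $p\text{-}\lim_{\alpha}T_{\alpha}y=y$ for $y=p\text{-}\lim_{\alpha}T_{\alpha}x_{0}$ (which settles $k=1$ and even puts the return set in $p$), the observation that the product system on $X^{k}$ recurs only at a point that need not be diagonal, and the reduction $S^{(i)}_{\alpha}=T^{(i)}_{\alpha}(T^{(k)}_{\alpha})^{-1}$, which is exactly the linear case of the weight-lowering step $\mathcal{A}=\{t_{0}^{-1}PQ^{-1}t_{0}:P\in\mathcal{R}\}$ in the paper's proof.

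The gap is the final step, and it is genuine. The inductive hypothesis gives, for each $\delta>0$, some $z$ and some $\alpha$ with $d(S^{(i)}_{\alpha}z,z)<\delta$, and then $x=(T^{(k)}_{\alpha})^{-1}z$ has all $T^{(i)}_{\alpha}x$ within $\delta$ of $z$ --- but this $x$ depends on $\alpha$. So even if the set of good $\alpha$'s for $z$ lies in $p$, there is no single base point whose good $\alpha$'s form a set in $p$: the induction hands you ``for each $\alpha$ there is an $x$'', while the theorem needs ``there is an $x$ good for some $\alpha$'' with $x$ fixed before $\alpha$. Your proposed rescue --- ``in the minimal system the diagonal point is uniformly recurrent'' --- conflates two different actions: a diagonal point is indeed uniformly recurrent under the diagonal $G$-action $g\times\cdots\times g$ (since $(\Delta,G)\cong(X,G)$ is minimal), but what is needed is recurrence of a diagonal point under the product system $\tilde{T}_{\alpha}=T^{(1)}_{\alpha}\times\cdots\times T^{(k)}_{\alpha}$, which is precisely the statement being proved; asserting it begs the question. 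What actually closes the induction is a uniformization-plus-pigeonhole argument that your sketch never performs: use minimality and commutativity to make every target approximately reachable (a finite subcover $X=\bigcup_{g\in F}g^{-1}U$ controls the finitely many moduli of continuity), build a chain $z_{0},z_{1},z_{2},\ldots$ with pairwise disjoint increments $\alpha_{j}$ (possible because reachability applies to the systems restricted to $\mathcal{F}(\mathbb{N}\setminus(\alpha_{1}\cup\cdots\cup\alpha_{j}))$), and close the loop by compactness: some $z_{j},z_{l}$ are close, and then $\alpha_{j+1}\cup\cdots\cup\alpha_{l}$ is a return time for $x=z_{l}$. Note that this final return time is a union of increments selected by pigeonhole, not a member of any fixed idempotent, which is why ``keeping the return times inside $p$'' cannot be the mechanism. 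This is exactly the role played in the paper's proof of Theorem \ref{alkgeb nil} by the syndetic set $B$, the finite set $H$, the iteratively chosen $\delta_{j}$, $q_{j}$, $t_{j}$, and the pigeonhole $t_{l}=t_{m}$ yielding $\delta=\delta_{l}\cup\cdots\cup\delta_{m}$; without this (or an equivalent argument with minimal left ideals in $\beta G$), your induction does not close.
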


A combinatorial version of the above theorem is the following.
\begin{thm}
Let G be an abelian group, and let $T_{\alpha\in\mathcal{F}}^{(1)},T_{\alpha\in\mathcal{F}}^{(2)},\ldots,T_{\alpha\in\mathcal{F}}^{(k)}$,
be IP-systems in $G$. For any finite coloring of $G$ there exist
$h\in G$ and a nonempty $\alpha\in\mathcal{F}$ such that the elements
$hT_{\alpha\in\mathcal{F}}^{(1)},hT_{\alpha\in\mathcal{F}}^{(2)},\ldots,hT_{\alpha\in\mathcal{F}}^{(k)}$
all have the same color.
\end{thm}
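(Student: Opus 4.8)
The plan is to derive this combinatorial statement from the topological IP-recurrence theorem stated above by means of a symbolic correspondence. Fix a finite coloring $\chi\colon G\to\{1,\dots,r\}$. Since every element $T^{(i)}_{\alpha}$ lies in the countable subgroup $H\le G$ generated by $\{T^{(i)}_{\{n\}}:n\in\mathbb{N},\,1\le i\le k\}$, and since a solution $(h,\alpha)$ with $h\in H$ is in particular a solution in $G$, I would first replace $G$ by $H$ and thereby assume $G$ is countable. Then the symbolic space $\Omega=\{1,\dots,r\}^{G}$, with the product topology, is compact and metrizable; fixing an enumeration $g_{0}=e,g_{1},g_{2},\dots$ of $G$ I would use a compatible metric $d$ for which $d(\omega,\omega')<1$ forces $\omega(e)=\omega'(e)$.

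Next I would let $G$ act on $\Omega$ by the shifts $(\sigma_{g}\omega)(x)=\omega(xg)$, which are commuting homeomorphisms satisfying $\sigma_{g}\sigma_{g'}=\sigma_{gg'}$. For each $i$ set $S^{(i)}_{\alpha}:=\sigma_{T^{(i)}_{\alpha}}$. Because $T^{(i)}_{\alpha\cup\beta}=T^{(i)}_{\alpha}T^{(i)}_{\beta}$ whenever $\alpha\cap\beta=\emptyset$, the families $\{S^{(i)}_{\alpha}\}$ are again IP-systems of self-homeomorphisms, and they commute with one another since $G$ is abelian. Taking $X:=\overline{\{\sigma_{g}\chi:g\in G\}}$, the orbit closure of the point $\chi\in\Omega$, then yields a compact metric space invariant under every $\sigma_{g}$, hence under all the $S^{(i)}_{\alpha}$, so that the hypotheses of the topological theorem are met on $X$.

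I would then apply the IP-recurrence theorem to $X$ with $\epsilon=\tfrac12$, obtaining a point $y\in X$ and a nonempty $\alpha\in\mathcal{F}$ with $d(S^{(i)}_{\alpha}y,y)<\tfrac12$ for all $i$. Evaluating at the identity coordinate, the inequality $d<1$ gives $(S^{(i)}_{\alpha}y)(e)=y(e)$, that is $y(T^{(i)}_{\alpha})=y(e)=:c$ for every $i$. It remains to transfer this to $\chi$ itself: since $y$ is a limit of orbit points, I would choose $g\in G$ so that $\sigma_{g}\chi$ agrees with $y$ on the finitely many coordinates $e,T^{(1)}_{\alpha},\dots,T^{(k)}_{\alpha}$. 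Then $\chi(gT^{(i)}_{\alpha})=(\sigma_{g}\chi)(T^{(i)}_{\alpha})=y(T^{(i)}_{\alpha})=c$ for all $i$, so $h:=g$ together with $\alpha$ are exactly as required.

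The conceptual core, and the step demanding the most care, is this passage from topological closeness to an exact combinatorial coincidence: one must arrange the metric so that proximity of $S^{(i)}_{\alpha}y$ to $y$ genuinely controls the value at the identity coordinate, and then invoke density of the orbit of $\chi$ to realize the abstract recurrent point $y$ as an honest translate of the coloring. Verifying that the homomorphism and IP structure is carried faithfully from $G$ to the homeomorphism families, and that the orbit closure is genuinely $\sigma_{g}$-invariant, is routine but should be checked; no hard estimate is involved, the difficulty lying entirely in assembling the correspondence correctly.
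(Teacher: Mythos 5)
Your proof is correct. The paper itself offers no proof of this statement: it appears in the introduction as quoted background from Furstenberg and Weiss, presented as ``a combinatorial version'' of the topological IP-recurrence theorem stated immediately before it, so there is nothing in the paper to compare your argument against line by line. What you wrote is the standard correspondence-principle derivation that this phrase alludes to, and the details check out: the reduction to the countable subgroup generated by the $T^{(i)}_{\{n\}}$ is legitimate because an IP-system is determined by its values on singletons; with the convention $(\sigma_g\omega)(x)=\omega(xg)$ the shifts satisfy $\sigma_g\sigma_{g'}=\sigma_{gg'}$, so the $S^{(i)}_\alpha$ really are commuting IP-systems of homeomorphisms of the invariant, compact orbit closure; proximity below $1$ in your metric pins down the identity coordinate; and density of the orbit of $\chi$ converts the abstract recurrent point into an honest translate, with abelianness turning $\chi(T^{(i)}_\alpha g)$ into $\chi(gT^{(i)}_\alpha)$. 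It is worth noting that the machinery the paper actually develops runs in the opposite direction from yours: its new results (Theorems 2.7 and 2.8) derive topological recurrence from ultrafilter algebra, by coloring $G$ according to which $\epsilon$-ball of the orbit closure the point $gx$ falls into, extracting a piecewise syndetic color class, and applying recurrence in $\beta G$. That algebraic route would also prove the present statement --- apply Theorem 2.7 to a piecewise syndetic color class $A$ with the degree-one polynomials $P_i(\alpha)=T^{(i)}_\alpha$ --- and it yields more than your argument does: the translate $h$ can be taken in $A$ itself (so $h$ shares the common color), and the set of admissible $\alpha$ is not merely nonempty but IP$^*$ in $(\mathcal{P}_{f}(\mathbb{N}),\uplus)$.
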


In \cite[Theorem 4.1]{key-7} Bergelson and Leibman established a
nil-IP-multiple recurrence theorem which extended all the abelian
results mentioned above to a nilpotent setup.
\begin{thm}
Let $G$ be a nilpotent group of self-homeomorphisms of a compact
metric space $(X,d)$ and let $P_{1},\ldots,P_{k}:\mathcal{F}\to G$
be polynomial mappings satisfying $P_{i}(\emptyset)=1_{G}$ for all
$i=1,2,\ldots,k$. Then for any $\epsilon>0$ there exist $x\in X$
and $\alpha\in\mathcal{F}$ such that $d(P_{i}(\alpha)x,x)<\epsilon$
for all $i=1,2,\ldots,k$.
\end{thm}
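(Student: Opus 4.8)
The plan is to deduce the theorem from the algebraic structure of the Stone--\v{C}ech compactification of the partial semigroup $(\mathcal{P}_{f}(\mathbb{N}),\uplus)$ together with an induction that simultaneously controls the degree of the polynomials and the nilpotency class of $G$. First I would recast the metric recurrence statement in dynamical language: it suffices to produce, for the diagonal action generated by $(P_{1},\dots,P_{k})$ on a suitable product space, a point and a nonempty $\alpha\in\mathcal{F}$ whose image under $(P_{1}(\alpha),\dots,P_{k}(\alpha))$ returns to within $\epsilon$ of the diagonal. By compactness of $X$ and a standard minimality/pigeonhole reduction, this follows once one shows that some point is \emph{IP-recurrent} along a suitable sub-semigroup, i.e.\ that the relevant limit along an idempotent fixes it.

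Second, I would invoke the algebra of the compactification: the partial semigroup $(\mathcal{P}_{f}(\mathbb{N}),\uplus)$ has an associated compact right-topological semigroup in which minimal idempotents exist. Fixing a minimal idempotent $p=p\uplus p$ furnishes a $p$-limit operation $p\text{-}\lim_{\alpha}$ perfectly adapted to IP-convergence: for a single IP-system of commuting homeomorphisms it yields a recurrent point directly, which is exactly the content already recorded in the IP recurrence theorem stated above. The crucial point is that the idempotent is chosen \emph{once} and used throughout, so that the recurrence extracted at each stage of the induction is simultaneous across $i=1,\dots,k$.

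The core is a double induction. The outer induction is on the nilpotency class $s$ of $G$, with the abelian case $s=1$ supplied by the IP multiple recurrence theorem above; the inner induction is a PET (polynomial exhaustion) induction on a complexity/weight assigned to the family $\{P_{1},\dots,P_{k}\}$. The inductive step is a van der Corput/differencing maneuver carried out inside the semigroup: replacing each $P_{i}(\alpha)$ by the ``derivative'' $P_{i}(\alpha\uplus\beta)\,P_{i}(\beta)^{-1}$ for an auxiliary nonempty $\beta$ disjoint from $\alpha$ produces a new polynomial family of strictly smaller PET-complexity. Because $G$ is nilpotent, the noncommutative error terms generated by this substitution are commutators lying in $[G,G]$, a group of smaller nilpotency class; these are absorbed by the outer induction hypothesis, while the main terms are absorbed by the inner one. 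Running the $p$-limit through the differenced family and then reassembling gives the desired simultaneous return to within $\epsilon$.

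I expect the main obstacle to be the van der Corput step itself, and specifically the bookkeeping that makes the double induction well-founded. One must define a weight on polynomial mappings into a nilpotent group that reflects both the degree in the variable $\alpha$ and the depth in the lower central series $G=G_{1}\supseteq G_{2}\supseteq\cdots\supseteq G_{s+1}=\{1_{G}\}$, and then verify that differencing strictly decreases this weight in the PET order while the commutator residues fall into a strictly deeper term of the series. Equally delicate is checking that the limits taken along the single fixed minimal idempotent $p$ remain compatible as one descends the induction, so that the finitely many recurrence conditions are met by one and the same pair $(x,\alpha)$; this is precisely where idempotency and minimality of $p$, together with the partial-semigroup structure of $\uplus$, do the essential work.
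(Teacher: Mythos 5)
Your sketch is, in substance, the original Bergelson--Leibman PET/van der Corput argument (the proof of \cite[Theorem 4.1]{key-7}, which the paper cites rather than reproves), dressed in the language of idempotent ultrafilter limits. That is a genuinely different route from the paper's: the paper first proves an algebraic strengthening (Theorem \ref{alkgeb nil}) --- for a piecewise syndetic $A\subseteq G$ and a minimal left ideal $L$ of $\beta G$, the set of $\alpha$ with $\overline{A}\cap L\cap\bigcap_{P\in\mathcal{R}}\overline{P(\alpha)^{-1}A}\neq\emptyset$ lies in every idempotent $v\in\beta\mathcal{F}$ --- via a minimal-counterexample argument on the Bergelson--Leibman weight of a system, with syndeticity supplying a finite set $H$ of translates and a pigeonhole $t_{l}=t_{m}$ closing the contradiction; it then deduces the topological statement, in the stronger IP$^{*}$ form (Theorem \ref{nil top}), by coloring an orbit closure and selecting a piecewise syndetic color class. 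Your route, if completed, yields only the bare existence of one pair $(x,\alpha)$, whereas the paper's yields IP$^{*}$-many $\alpha$ and the piecewise-syndetic refinements in its corollaries.

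There is, however, a genuine gap in your inductive architecture. You propose that after differencing, the commutator error terms (valued in $[G,G]$) are \emph{absorbed by the outer induction hypothesis} on nilpotency class, \emph{while the main terms are absorbed by the inner one}. This cannot work as stated: the conclusion requires a \emph{single} pair $(x,\alpha)$ at which all maps return simultaneously, so the error terms cannot be disposed of by a separate invocation of any induction hypothesis --- applying the class-$(s-1)$ statement to the errors and the PET hypothesis to the main terms produces two unrelated points and two unrelated $\alpha$'s. The errors must be adjoined to the \emph{same} system as the differenced main terms, and one must show that the enlarged system strictly precedes the original in a well-ordering; this is exactly what the Bergelson--Leibman weight machinery quoted in Section 3 of the paper accomplishes (weights $(l,d)$ combining lower-central-series level with degree, the equivalence $\sim$ of principal parts, the weight vector of a system, and the reductions $w(P^{-1}U_{\gamma}P)<w(P)$, $w(\mathcal{A}^{\prime})\leq w(\mathcal{A})$ for systems enlarged by $PQ$, $QP$, and $w(\mathcal{A}^{\prime})<w(\mathcal{A})$ for $Q^{-1}P$, $P^{-1}Q$ with $Q$ minimal). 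Your closing paragraph gestures at precisely this fix, but note that it \emph{replaces}, rather than implements, your stated double induction: once such a weight on whole systems is in place there is one well-founded induction (or one minimal counterexample, as in the paper), not an outer induction on class whose hypothesis swallows the commutators. As written, the step ``errors absorbed by the outer induction hypothesis'' is the point where your proof would fail.
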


In the present work we will prove that the collection of $\alpha's$
in the partial semigroup (to be defined in the next section) $(\mathcal{P}_{f}(\mathbb{N}),\uplus)$
will be an IP$^{*}$-set. Our machinery in the proof will be using
algebraic structure of Stone-\v{C}ech compactification of discrete
semigroup. In fact we are inspired by \cite{key-11} and \cite{key-12}.

\section{Preliminaries}

For our purpose let us first introduce a brief algebraic structure
of $\beta S$ for a discrete semigroup $(S,+)$. We take the points
of $\beta S$ to be the ultrafilters on $S$, identifying the principal
ultrafilters with the points of $S$ and thus pretending that $S\subseteq\beta S$.
Given$A\subseteq S$ let us set, 
\[
\overline{A}=\{p\in\beta S:A\in p\}.
\]
 Then the set $\{\overline{A}:A\subseteq S\}$ is a basis for a topology
on $\beta S$. The operation$+$ on $S$ can be extended to the Stone-\v{C}ech
compactification $\beta S$ of $S$ so that $(\beta S,+)$ is a compact
right topological semigroup (meaning that for any $p\in\beta S$,
the function $\rho_{p}:\beta S\rightarrow\beta S$ defined by $\rho_{p}(q)=q+p$
is continuous) with $S$ contained in its topological center (meaning
that for any $x\in S$, the function$\lambda_{x}:\beta S\rightarrow\beta S$
defined by $\lambda_{x}(q)=x+q$ is continuous). Given $p,q\in\beta S$
and $A\subseteq S$, $A\in p+q$ if and only if $\{x\in S:-x+A\in q\}\in p$,
where $-x+A=\{y\in S:x+y\in A\}$. 

A nonempty subset $I$ of a semigroup $(T,+)$ is called a left ideal
of $\emph{T}$ if $T+I\subset I$, a right ideal if $I+T\subset I$,
and a two sided ideal (or simply an ideal) if it is both a left and
right ideal. A minimal left ideal is the left ideal that does not
contain any proper left ideal. Similarly, we can define minimal right
ideal and smallest ideal.

Any compact Hausdorff right topological semigroup $(T,+)$ has a smallest
two sided ideal

\[
\begin{array}{ccc}
K(T) & = & \bigcup\{L:L\text{ is a minimal left ideal of }T\}\\
 & = & \,\,\,\,\,\bigcup\{R:R\text{ is a minimal right ideal of }T\}
\end{array}
\]

Given a minimal left ideal $L$ and a minimal right ideal $R$, $L\cap R$
is a group, and in particular contains an idempotent. An idempotent
in $K(T)$ is called a minimal idempotent. If $p$ and $q$ are idempotents
in $T$, we write $p\leq q$ if and only if $p+q=q+p=p$. An idempotent
is minimal with respect to this relation if and only if it is a member
of the smallest ideal. See \cite{key-13}  for an elementary introduction
to the algebra of $\beta S$ and for any unfamiliar details.
\begin{defn}
(Partial semigroup) Let ${G}$ be a set, let ${X\subset G\times G}$
and let ${\ast:X\rightarrow G}$ be an operation. The triple ${(G,X,\ast)}$
is a partial semigroup if it satisfies the following properties:

For any ${x,y,z\in G}$, then ${(x\ast y)\ast z=x\ast(y\ast z)}$
in the sense that either both sides are undefined or both are defined
and equal. For any ${n\in{\mathbb{N}}}$ and ${x_{1},\dots,x_{n}\in G}$
there exists ${y\in G\setminus\{x_{1},\dots,x_{n}\}}$ such that ${x_{i}\ast y}$
is defined. Observe that the second condition implies that ${G}$
is infinite. A partial semigroup is commutative if ${x\ast y=y\ast x}$
for every ${(x,y)\in X}$.
\end{defn}

\begin{example}
Let ${G=\mathcal{F}({\mathbb{N}})}$, let ${X:=\{(\alpha,\beta)\in G^{2}:\alpha\cap\beta=\emptyset\}}$
be the family of all pairs of disjoint sets, and let ${\ast:X\rightarrow G}$
be the union. It is easy to check that this is a commutative partial
semigroup. We shall denote this partial semigroup as $(\mathcal{F}({\mathbb{N}}),\uplus)$.
\end{example}

To define a nil IP-polynomial we need to recall some facts. Let us
denote the collection of finite subsets of $\mathbb{N}$ be $\mathcal{F}$. 
\begin{defn}
For a semigroup $G$, an IP-system is a mapping $\mathcal{F}\rightarrow G$
defined as $\alpha\rightarrow g_{\alpha}$, so that $g_{\alpha\cup\beta}=g_{\alpha}g_{\beta}$
provided that $\alpha\cap\beta=\emptyset$.
\end{defn}

However if the above $G$ is commutative in nature, then we can define
a IP-polynomial inductively as given below.
\begin{defn}
Assuming a IP-polynomial of degree 0 be constant, we say that an IP-polynomial
defined as $P:\mathcal{F}\rightarrow G$, of degree $\leq d$ if for
any $\beta\in\mathcal{F}$, there exists a polynomial mapping $D_{\beta}P:\mathcal{F}(\mathbb{N}\setminus\beta)\rightarrow G$
of degree $\leq d-1$ such that $P(\alpha\cup\beta)=P(\alpha)+(D_{\beta}P)(\alpha)$
$\forall\alpha\in\mathcal{F}(\mathbb{N}\setminus\beta)$. 
\end{defn}

In the case of abelian $G$, it was proved in \cite[Theorem 8.1]{key-5},
that the necessary and sufficient condition for a mapping $P:\mathcal{F}\rightarrow G$
to be an IP-polynomial is that there must exists a $d\in\mathbb{N}$
and a family $\{g_{(j_{1},j_{2},\ldots,j_{d})}\}_{(j_{1},j_{2},\ldots,j_{d})\in\mathbb{N}^{d}}$of
elements in $G$, such that for any $\alpha\in\mathcal{F}$, one has
$P(\alpha)=\prod_{(j_{1},j_{2},\ldots,j_{d})\in\alpha^{d}}g_{(j_{1},j_{2},\ldots,j_{d})}.$
It is a characterization of commutative IP-polynomials which can be
generalized in case of nilpotent groups.
\begin{defn}
For a nilpotent group $G$, we define a mapping $P:\mathcal{F}\rightarrow G$
to be a nil IP-polynomial if for some $d\in\mathbb{N}$, there exists
a family $\{g_{(j_{1},j_{2},\ldots,j_{d})}\}_{(j_{1},j_{2},\ldots,j_{d})\in\mathbb{N}^{d}}$of
elements in $G$ and a linear order $\prec$ on $\mathbb{N}^{d}$
such that for any $\alpha\in\mathcal{F}$ we have, $P(\alpha)=\prod_{(j_{1},j_{2},\ldots,j_{d})\in\alpha^{d}}^{\prec}g_{(j_{1},j_{2},\ldots,j_{d})}.$
\end{defn}

V. Bergelson and Leibman in \cite[Theorem 4.1]{key-7} proved the
following result:
\begin{thm}
Let $G$ be a nilpotent group of self-homeomorphisms of a compact
metric space $(X,\rho).$ For any weight (defined in next section)
$\omega\in W,$ any $k\in\mathbb{N}$, and any $\epsilon>0$, $\exists N\in\mathbb{N}$
such that if $S$ is a set of cardinality $\geq N$ and $\mathcal{A}$
is a system of $k$ polynomial mappings $\mathcal{F}(S)\rightarrow G$
satisfying $\omega(P)\leq\omega$ and $P(\emptyset)=1{}_{G},\,P\in\mathcal{A}$,
there exists a point $x\in X$ and a nonempty $\alpha\in\mathcal{F}(S)$
such that $\rho(P(\alpha)x,x)<\epsilon$ $\forall P\in\mathcal{A}$.
\end{thm}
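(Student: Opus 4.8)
The plan is to first prove the infinitary $\mathrm{IP}^{*}$ form of the statement—that once the weight is bounded by $\omega$, the set of successful $\alpha\in\mathcal{F}(\mathbb{N})$ is an $\mathrm{IP}^{*}$ set—by Polynomial Exhaustion (PET) induction on $\omega$ carried out inside the Stone-\v{C}ech compactification $\beta S$ of the partial semigroup $S=(\mathcal{F}(\mathbb{N}),\uplus)$, and then to extract the quantitative threshold $N$ by a compactness argument. The descent principle driving the induction is that recurrence for a system $\mathcal{A}$ of weight $\omega$ reduces to recurrence for a \emph{derived} system whose weight is strictly smaller in the well-ordering of $W$, the iteration terminating at the ground level of degree-one (IP) systems.

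The base case concerns systems in which every $P\in\mathcal{A}$ has weight $\leq 1$, i.e. IP-systems $\alpha\mapsto g_{\alpha}$. Here I would fix a minimal idempotent $p$ in a minimal left ideal of $\beta S$; since $p+p=p$, the $p$-limit of the orbit $g_{\alpha}x$ returns to $x$, so for a suitable $x$ the set $\{\alpha:\rho(g_{\alpha}x,x)<\epsilon\}$ belongs to $p$ and is in particular nonempty. This is the classical topological IP recurrence, and applied simultaneously to the finitely many maps of the system it yields the $\mathrm{IP}^{*}$ conclusion at the base level. The passage to the finitary threshold $N$ is then a routine compactness step: if no $N$ sufficed, counterexamples on sets $S_{n}$ with $|S_{n}|\to\infty$ would, through an ultrafilter limit, assemble into a point of $X$ and an IP-configuration on $\mathcal{F}(\mathbb{N})$ contradicting the $\mathrm{IP}^{*}$ recurrence furnished by $p$.

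For the inductive step, given $\mathcal{A}$ of weight $\omega$ exceeding the base level, I would single out a polynomial $P_{0}\in\mathcal{A}$ of minimal positive degree and differentiate the system against it, forming the derived system from the products $P_{0}(\alpha)^{-1}P(\alpha\cup\beta)$ as $P$ ranges over $\mathcal{A}$. Using the operation $D_{\beta}P$ together with the ordered-product description of nil IP-polynomials, this derived system has weight strictly below $\omega$, so the induction hypothesis supplies its recurrence; a van der Corput type lemma, phrased through the idempotent $p$ in $\beta S$, then lifts that recurrence back to the original system $\mathcal{A}$.

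The hardest part is this transfer step in the \emph{non-commutative} nilpotent setting. For abelian $G$ the rule $P(\alpha\cup\beta)=P(\alpha)+(D_{\beta}P)(\alpha)$ splits cleanly and the van der Corput estimate is routine, but for nilpotent $G$ one must track the commutators generated when the ordered product $\prod^{\prec}g_{(j_{1},\ldots,j_{d})}$ is rearranged, and confirm that each such commutator has strictly lower degree and is absorbed by the weight bookkeeping. Establishing that $\omega$ is well-ordered on $W$ and that every differentiation strictly descends in it—so that the nilpotency class bounds the number of commutator corrections and the induction halts after finitely many steps—is the technical core on which the whole scheme rests.
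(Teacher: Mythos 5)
A preliminary remark: the paper never proves this statement at all --- it is quoted as background from Bergelson--Leibman \cite[Theorem 4.1]{key-7}, and the paper's own contributions (Theorems \ref{alkgeb nil} and \ref{nil top}) are proved assuming it. So your proposal has to be measured against the argument in \cite{key-7}, which is a \emph{directly finitary} PET induction on the weight vector of the system closed by a color-focusing/pigeonhole step, and against the paper's cognate Theorem \ref{alkgeb nil}, proved by a minimal-counterexample PET descent in $\beta G$ using piecewise syndeticity. Your outline deviates from both in two places where it genuinely breaks. The first is the base case. For a \emph{single} IP system the identity $p+p=p$ does produce a point $y$ with $\{\alpha:\rho(g_{\alpha}y,y)<\epsilon\}\in p$; but ``applied simultaneously to the finitely many maps'' it does not give what you claim: running the same argument on the product system on $X^{k}$ yields a tuple $(y_{1},\ldots,y_{k})$ of \emph{distinct} points, each recurrent along a common member of $p$, whereas the theorem needs one point $x$ recurrent for all $k$ maps at once. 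Simultaneous recurrence at a single point is precisely the content of the Furstenberg--Weiss IP recurrence theorem; it is of van der Waerden strength already for commuting homeomorphisms, and your degree-one systems need not even commute, since they sit in a nonabelian nilpotent group. A correct PET scheme therefore cannot discharge ``IP systems'' by the bare idempotent computation; the induction must bottom out at the trivial system, with linear systems handled inside the descent (this is how \cite{key-7} proceeds, and it is visible in the proof of Theorem \ref{alkgeb nil} in the role of the minimal-weight polynomial $Q$, the finite set $H$, and the pigeonhole $t_{l}=t_{m}$). For the same reason your ``van der Corput type lemma'' cannot serve as the lifting device: van der Corput is a Hilbert-space averaging tool with no off-the-shelf analogue in the topological category, where the standard substitute is exactly that focusing/pigeonhole argument.

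The second break is the finitization. The compactness step you call routine fails as described: negating the finitary statement gives systems $\mathcal{A}_{n}$ on $\mathcal{F}(\{1,\ldots,n\})$ with values in $G$, and there is no compactness available on bounded-weight $G$-valued polynomial systems, because $G$ is an infinite abstract group and ultrafilter limits of the elements $P_{n,i}(\alpha)$ simply do not exist in $G$. Passing to limits in $X^{X}$ instead lands in the enveloping semigroup, where continuity, invertibility, the nilpotent group structure, and the entire weight calculus of Section 3 are lost, so the infinitary IP$^{*}$ statement cannot be applied to the limit object. Gluing the $\mathcal{A}_{n}$ block-diagonally into one honest bounded-weight system on $\mathcal{F}(\mathbb{N})$ does not rescue this either: the good $\alpha$ furnished by the infinitary theorem may straddle several blocks, and an IP$^{*}$ set need not meet the finite family $\mathcal{F}(B_{n})$ of any single block. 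This is exactly why Bergelson--Leibman prove the finitary statement directly, threading the uniform bound $N$ through the PET induction rather than recovering it afterwards by compactness; to repair your infinitary-first architecture you would have to either make the induction finitary from the start or supply a genuinely new finitization argument.
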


In this paper we will prove the following theorem which is algebraic
version of the above theorem:
\begin{thm}
\label{alkgeb nil}Let $\mathcal{R}$ be a system and $v=v+v\in\beta\mathcal{F}$
and let $A$ be a picewise syndetic subset of $G$ and let $L$ be
a minimal left ideal of $\beta G$ such that $\overline{A}\cap L\neq\emptyset$.
Then, 
\[
\{\alpha\in\mathcal{F}:\overline{A}\cap L\cap_{p\in\mathcal{R}}\overline{P(\alpha)^{-1}A}\neq\emptyset\}\in v.
\]
\end{thm}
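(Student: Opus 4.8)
The plan is to translate the nonemptiness condition into pure ultrafilter language and then run a Bergelson--Leibman weight (PET) induction on the system $\mathcal{R}$, letting the idempotent $v$ supply the combinatorial focusing that locates the required $\alpha$. First I would unwind the target set. Recalling that for $g\in G$ and $q\in\beta G$ one has $A\in gq$ iff $g^{-1}A\in q$, the set $\overline{P(\alpha)^{-1}A}$ is precisely $\{q\in\beta G:A\in P(\alpha)q\}$. Hence $q\in\overline{A}\cap L\cap\bigcap_{P\in\mathcal{R}}\overline{P(\alpha)^{-1}A}$ is equivalent to demanding $q\in L$, $A\in q$, and $A\in P(\alpha)q$ for every $P\in\mathcal{R}$. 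Since $L$ is a left ideal of $\beta G$, left translation by $P(\alpha)\in G$ carries $L$ into $L$, so the constraint $P(\alpha)q\in L$ is automatic. Thus the set under study is exactly
\[
B_{\mathcal{R}}=\{\alpha\in\mathcal{F}:\exists\,q\in L\cap\overline{A}\ \text{with}\ A\in P(\alpha)q\ \text{for all }P\in\mathcal{R}\},
\]
and the goal becomes $B_{\mathcal{R}}\in v$. The hypothesis $\overline{A}\cap L\neq\emptyset$ already shows $B_{\mathcal{R}}\neq\emptyset$, since $P(\emptyset)=1_{G}$ forces $\emptyset\in B_{\mathcal{R}}$.

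Next I would induct on the weight $\omega(\mathcal{R})$. In the base case every member of $\mathcal{R}$ is a degree-$0$ polynomial, hence constant and equal to $1_{G}$ by $P(\emptyset)=1_{G}$; then $P(\alpha)q=q$ for all $\alpha$, any witness $q\in L\cap\overline{A}$ serves, and $B_{\mathcal{R}}=\mathcal{F}\in v$. For the inductive step I would single out a polynomial of maximal weight and invoke the nil IP-polynomial structure: for each $\beta\in\mathcal{F}$ the derivative $D_{\beta}P\colon\mathcal{F}(\mathbb{N}\setminus\beta)\to G$ has strictly smaller weight, and the defining relation comparing $P(\alpha\uplus\beta)$ with $P(\alpha)$ through $(D_{\beta}P)(\alpha)$ (with products taken in the fixed order $\prec$) rewrites the recurrence for $\mathcal{R}$ at $\alpha\uplus\beta$ as a recurrence for a system of strictly lower weight evaluated at $\alpha$. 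The induction hypothesis then delivers membership in $v$ for the derivative system.

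The idempotent $v$ performs the focusing. Since $v=v+v$, for any $B\in v$ the set $B^{\star}=\{\alpha\in B:\{\beta:\alpha\uplus\beta\in B\}\in v\}$ again lies in $v$; iterating this standard idempotent lemma lets me descend through the nested $v$-large sets, choosing successive increments $\beta$ whose supports are disjoint, so that the union assembling the final $\alpha$ is legitimate in the partial semigroup and each derivative sits on its correct restricted domain $\mathcal{F}(\mathbb{N}\setminus\beta)$. At each stage a witness $q'\in L\cap\overline{A}$ provided by the induction hypothesis is transported through the nilpotent products by left translation with the fixed elements $P(\beta)$; because $L$ is a left ideal this keeps the witness inside $L$, and the membership statements $A\in q'$ chain along the construction to yield $A\in P(\alpha)q$ for the assembled $\alpha$ and a single witness $q\in L\cap\overline{A}$. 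This places $\alpha$ in $B_{\mathcal{R}}$ for a $v$-large set of $\alpha$, which is the assertion.

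The step I expect to be the main obstacle is the bookkeeping forced by noncommutativity. Because $P(\alpha)=\prod^{\prec}g_{(j_{1},\ldots,j_{d})}$ is an \emph{ordered} product in a merely nilpotent group, the derivative relations are far more delicate than in the abelian case: one must track the linear order $\prec$ as $\alpha$ and $\beta$ are combined, verify that the weight genuinely decreases, and confirm that the comparison relation between $P(\alpha\uplus\beta)$ and its derivative remains valid on $\mathcal{F}(\mathbb{N}\setminus\beta)$. Coordinating this with the simultaneous shuttling of witnesses between $\beta\mathcal{F}$, where $v$ selects the disjoint increments $\beta$, and $\beta G$, where $L$ and $\overline{A}$ constrain the witness $q$, all while respecting the disjointness constraints intrinsic to the partial semigroup $(\mathcal{F},\uplus)$, is where the genuine difficulty of the argument lies.
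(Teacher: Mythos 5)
Your translation into ultrafilter language is correct ($P(\alpha)^{-1}A\in q$ iff $A\in P(\alpha)q$, and $P(\alpha)q\in L$ is automatic since $L$ is a left ideal), the star lemma for the idempotent $v$ is the right focusing tool, and induction on the weight $w(\mathcal{R})$ is indeed the right skeleton --- the paper runs it contrapositively, as a minimal counterexample. But your inductive step does not close, and the gap sits exactly at the point you dismiss as ``bookkeeping.'' Applying the induction hypothesis to a derived system such as $\{Q^{-1}P:P\in\mathcal{R}\}$ produces a witness $q'$ with $A\in Q(\alpha)^{-1}P(\alpha)q'$, i.e.\ $P(\alpha)q'\ni Q(\alpha)A$: this is recurrence into the \emph{translate} $Q(\alpha)A$, not into $A$. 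Your claim that witnesses are ``transported through the nilpotent products by left translation with the fixed elements $P(\beta)$'' and that ``the membership statements $A\in q'$ chain along'' is precisely what fails: left translation by $P(\beta)$ keeps $q'$ inside $L$, but $A\in q'$ does not give $A\in P(\beta)q'$ --- that would require $P(\beta)^{-1}A\in q'$, which is the very thing being proved, not something you may assume.

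The paper closes this gap with machinery your proposal never invokes. Piecewise syndeticity of $A$ is used to fix $q_{0}\in L\cap\overline{A}$ and form the \emph{syndetic} set $B=\{\gamma\in G:\gamma A\in q_{0}\}$, so that $G=\bigcup_{t\in H}t^{-1}B$ for a \emph{finite} $H$. The induction hypothesis is then applied not to $A$ but to sets of the form $C_{j}=t_{j}^{-1}B\cap\bigcap_{P\in\mathcal{R}}t_{j-1}^{-1}P(\delta_{j})^{-1}B$, paired with conjugated systems such as $\{t_{j}^{-1}PQ^{-1}t_{j}\}$ and $\{t_{0}^{-1}P(\delta_{1})^{-1}(U_{\delta_{1}}P)t_{0}t_{1}^{-1}Q^{-1}t_{1}\}$, where $Q\in\mathcal{R}$ has minimal weight; it is these conjugates and the operator $U_{\gamma}P(\alpha)=P(\gamma\cup\alpha)$, via $w(P^{-1}U_{\gamma}P)<w(P)$, $w(g^{-1}Pg)\le w(P)$ and item 5 of the weight theorem, that drop the weight --- not your derivative $D_{\beta}P$, whose defining relation $P(\alpha\cup\beta)=P(\alpha)+(D_{\beta}P)(\alpha)$ is the \emph{abelian} notion and is not available in the nilpotent setting. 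Finally, since $H$ is finite, pigeonhole yields indices $l<m$ with $t_{l}=t_{m}$, and only then can the increments be assembled into $\delta=\delta_{l}\cup\cdots\cup\delta_{m}\in D^{\star}$ and the single witness $t_{m}aq_{0}\in\overline{A}\cap L\cap\bigcap_{P\in\mathcal{R}}\overline{P(\delta)^{-1}A}$ be exhibited, giving the contradiction. Without this syndeticity-plus-finite-translates color focusing, the chain of witnesses you describe never lands back in $\overline{A}$, so the central step of your argument is genuinely missing.
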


And we will apply this theorem present the following theorem which
is the more refined version of Nilpotent PHJ given in \cite{key-7}\cite{key-1}
\begin{thm}
\label{nil top}Let $X$ be a compact metric space and $G$ be the
nilpotent group of self homeomorphisms acting on $X$ . Let $\mathcal{A}$
be a system consisting of polynomials $\{P_{1},P_{2},\ldots,P_{k}\}$.
Then for every $x\in X$, $\varepsilon>0$, there exists some $a\in G$
such that 
\[
\textup{\ensuremath{\{\alpha\in\mathcal{F}:\rho(T^{P_{i}(\alpha)}ax,ax)<\varepsilon\}}}
\]
 is an IP$^{*}$ set in the partial semigroup $(\mathcal{P}_{f}(\mathbb{N}),\uplus)$.
\end{thm}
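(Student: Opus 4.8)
The plan is to transfer the combinatorial Theorem~\ref{alkgeb nil} into the topological setting by encoding the orbit of $x$ as a finite coloring of $G$ and then reading the conclusion of Theorem~\ref{alkgeb nil} back as a recurrence statement. First I would fix $x\in X$ and $\varepsilon>0$ and cover the compact metric space $X$ by finitely many balls $B_{1},\dots,B_{r}$ of radius $\varepsilon/4$, chosen so that the associated color classes $A_{j}=\{g\in G:T^{g}x\in B_{j}\}$ partition $G$. To obtain a piecewise syndetic class together with a distinguished minimal left ideal, I would fix a minimal idempotent $u\in K(\beta G)$ and let $A=A_{j_{0}}$ be the class with $A\in u$ (one exists since $G\in u$); then $A$ is central, hence piecewise syndetic, and if $L$ is the minimal left ideal of $\beta G$ containing $u$ we have $u\in\overline{A}\cap L$, so $\overline{A}\cap L\neq\emptyset$.

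With these choices I would apply Theorem~\ref{alkgeb nil} to the system $\mathcal{R}=\mathcal{A}=\{P_{1},\dots,P_{k}\}$, to $A$, and to $L$. Since the hypothesis only asks that $v=v\cdot v\in\beta\mathcal{F}$ be an idempotent and the set
\[
B=\Bigl\{\alpha\in\mathcal{F}:\overline{A}\cap L\cap\bigcap_{i=1}^{k}\overline{P_{i}(\alpha)^{-1}A}\neq\emptyset\Bigr\}
\]
does not depend on $v$, the theorem gives $B\in v$ for \emph{every} idempotent $v\in\beta\mathcal{F}$; that is, $B$ is an IP$^{*}$ set in $(\mathcal{P}_{f}(\mathbb{N}),\uplus)$. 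The dynamical meaning of membership comes from a ``same ball'' principle: if $q\in\beta G$ witnesses $\alpha\in B$, then $A\in q$ and $P_{i}(\alpha)^{-1}A\in q$ for each $i$, so $A\cap\bigcap_{i}P_{i}(\alpha)^{-1}A\in q$ is nonempty, and any $g$ in it satisfies $T^{g}x\in B_{j_{0}}$ together with $T^{P_{i}(\alpha)g}x\in B_{j_{0}}$ for all $i$; as $\operatorname{diam}B_{j_{0}}<\varepsilon$ this already yields $\rho\bigl(T^{P_{i}(\alpha)}(T^{g}x),T^{g}x\bigr)<\varepsilon$ for every $i$.

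The hard part is that the witness $g$ just produced depends a priori on $\alpha$, whereas the theorem demands a single $a\in G$ valid for the whole IP$^{*}$ set of $\alpha$; note that $B=\bigcup_{a\in A}\{\alpha:T^{P_{i}(\alpha)a}x\in B_{j_{0}}\ \forall i\}$ is an infinite union, and IP$^{*}$-ness is not partition regular, so a uniform $a$ cannot be extracted by set theory alone. To resolve this I would pin the point down before quantifying over $\alpha$: set $y=T^{u}x=u\text{-}\lim_{g}T^{g}x$, a uniformly recurrent point (because $u$ is a minimal idempotent) lying in $\overline{B_{j_{0}}}$. Using that $L=\beta G\cdot u$, so that $q\cdot u=q$ for every $q\in L$ and $u$ is a right identity on $L$, I would try to normalize the witnesses to the fixed idempotent $u$, replacing $B$ by the return set $B_{u}=\{\alpha:P_{i}(\alpha)^{-1}A\in u\ \text{for all }i\}$; then continuity of each $T^{P_{i}(\alpha)}$ and the $u$-limit definition of $y$ give $T^{P_{i}(\alpha)}y\in\overline{B_{j_{0}}}$, hence $\rho(T^{P_{i}(\alpha)}y,y)<\varepsilon$ with $y$ now independent of $\alpha$. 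The remaining and most delicate step is to realize this recurrence at a genuine orbit point: I would select one $a\in A$ with $T^{a}x$ in the same ball $B_{j_{0}}$ as $y$ and keep the entire estimate at the level of the ball, \emph{never} transferring distances through the maps $T^{P_{i}(\alpha)}$, whose expansion cannot be controlled uniformly in $\alpha$; this should keep $\{\alpha:\rho(T^{P_{i}(\alpha)}T^{a}x,T^{a}x)<\varepsilon\}\supseteq B_{u}$ IP$^{*}$. I expect the genuine obstacle to be exactly this normalization of the witness to $u$ and the passage from the limit point $y$ to the orbit point $T^{a}x$; shrinking the mesh of the cover and exploiting the minimality of $\overline{Gy}$ are the tools I would use to force it through.
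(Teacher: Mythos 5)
Your first half is precisely the paper's own argument: cover the space by finitely many small balls, color $G$ according to which ball $T^{g}x$ falls into, select a piecewise syndetic (in your version, central) class $A$ with $\overline{A}\cap L\neq\emptyset$, apply Theorem \ref{alkgeb nil}, and for each $\alpha$ in the resulting IP$^{*}$ set extract a point $a\in A\cap\bigcap_{i}P_{i}(\alpha)^{-1}A$, so that $T^{a}x$ and all $T^{P_{i}(\alpha)a}x$ lie in one ball. The quantifier problem you then isolate is genuine: Theorem \ref{alkgeb nil} only says that the set of $\alpha$ admitting \emph{some} witness is IP$^{*}$, and nothing in its proof pins the witnessing ultrafilter down (there the witness is $t_{m}aq_{0}$, which varies with $\delta$). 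Consequently your proposed repair does not go through: there is no reason why the normalized set $B_{u}=\{\alpha:P_{i}(\alpha)^{-1}A\in u\ \forall i\}$ should belong to every idempotent $v\in\beta\mathcal{F}$, and you give no argument for it; and the final step, transferring the recurrence of the limit point $y=T^{u}x$ back to an orbit point $T^{a}x$, requires pushing an estimate through the maps $T^{P_{i}(\alpha)}$ uniformly in $\alpha$, which, as you yourself observe, is unavailable. So the proposal is not a proof; it stalls exactly where you predicted it would.

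You should know, however, that the paper's proof stalls at the same place, only silently: after invoking Theorem \ref{alkgeb nil} it asserts that ``there exists some $a\in G$ such that $\{a,P_{1}(\alpha)a,\ldots,P_{k}(\alpha)a\}\subset V_{j}$, where for all such $a$ the collection of $\alpha$ is IP$^{*}$,'' which is exactly the unjustified inversion of quantifiers, not a deduction. In fact the uniform-$a$ statement of Theorem \ref{nil top} seems to be false. Take $G=\mathbb{Z}$ acting by the shift on $\{0,1\}^{\mathbb{Z}}$, the single degree-one polynomial $P(\alpha)=\sum_{j\in\alpha}g_{j}$ with $g_{j}=4^{j}$, and let $x$ have blocks of $0$'s of radius $2\min\alpha$ centered at each sum $\sum_{j\in\alpha}g_{j}$ with $\alpha$ consisting of even indices, blocks of $1$'s centered likewise at the odd-indexed sums, and $0$ elsewhere; these blocks are pairwise disjoint by sparseness. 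Then the fixed points $\mathbf{0}$ and $\mathbf{1}$ are both IP-limits of the orbit of $x$; for any $a$, the point $T^{a}x$ is at distance $\geq1/2$ from one of them, and the return set $\{\alpha:\rho(T^{P(\alpha)}T^{a}x,T^{a}x)<1/4\}$ misses the entire set of finite unions $FU(\langle\{2j\}\rangle_{j\geq m})$ (or $FU(\langle\{2j+1\}\rangle_{j\geq m})$) for large $m$, hence is never IP$^{*}$. What the common first half of your argument and the paper's does prove is the version with the witness inside: $\{\alpha\in\mathcal{F}:\exists a\in G\ \text{with}\ \rho(T^{P_{i}(\alpha)}ax,ax)<\varepsilon\ \forall i\}$ is IP$^{*}$, which is the correct IP$^{*}$ refinement of the Bergelson--Leibman theorem and is consistent with the paper's two closing corollaries, where the existential quantifiers likewise sit inside.
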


\section{Polynomial mappings and triangular monomials}

In this section we will follow \cite{key-7}. Let $G_{1}\geq G_{2}\geq\ldots\geq G_{l}\geq G_{l+1}\geq\ldots$
be the lower central series of a nilpotent group $G$, and as $G$
is nilpotent there must exists some $n\in\mathbb{N}$ such that $G_{n}=\{1_{G}\}$.

Let $\mathcal{F}^{\leq d}(S)$ be the set of all subsets of S of cardinality
$\leq d.$
\begin{defn}
Let $S$ be a nonempty set.\textbf{ }For a Nilpotent Group $G$, Monomial
of degree $d$ on $S$ with values in $G$ is a pair $(u,\prec)$,
where $u:S^{d}\rightarrow G,$ is a mapping and $\prec$ is a linear
order on $S^{d}$. The monomial $(u,\prec)$ induces a monomial mapping
$P_{u}:\mathcal{F}(S)\rightarrow G$ by the rule $P_{u}(\alpha)=\prod_{s\in\alpha^{d}}^{\prec}u(s)$.
\end{defn}

The level of a monomial $(u,\prec)$ of degree $d$ is defined to
be the positive integer $l$ such that, $u(S^{d})\subseteq G_{l}\setminus G_{l+1}$.
For the nilpotency class $q$ of $G$, we may define the level of
the identity monomial $u(S^{d})=1_{G}$is $q+1$. We can endow the
set of weight $W$ of monomials. by lexicographic ordering.
\begin{defn}
A polynomial mapping $P:\mathcal{F}(S)\rightarrow G$ is defined to
be the product of finitely many monomial mappings as $P(\alpha)=P_{u_{1}}(\alpha)P_{u_{2}}(\alpha)\ldots P_{u_{m}}(\alpha),\alpha\in\mathcal{F}(S)$
where $P_{u_{1}},P_{u_{2}},\ldots,P_{u_{m}}$are monomial mappings
corresponding to $(u_{1},\prec_{1})$, $\ldots$$,(u_{m},\prec_{m})$.
And the weight $w(P)$ is taken to be minimum over the possible all
representations of $P$ as the product $P=P_{u_{1}}P_{u_{2}}\ldots P_{u_{m}}$of
monomial mappings.
\end{defn}

As,it is not guaranteed for a polynomial $P$ of weight $(l,d)$,
$P(\mathcal{F}(S))\subseteq G_{l}\setminus G_{l+1}$, $(P(\mathcal{F}(S))\subseteq G_{l}$
but not guaranteed in $G_{l+1}$ the triangular monomial was introduced.

\medskip{}

\begin{defn}
If $v:\mathcal{F}^{=d}(S)\rightarrow G$ is a mapping and $\prec$
is a linear order on $\mathcal{F}^{=d}(S)$, then the pair$(v,\prec)$
iscalled a triangular monomial of degree $d$. This induces a polynomial
mapping $P:\mathcal{F}(S)\rightarrow G$ by the rule 
\[
P_{v}(\alpha)=\prod_{t\in\mathcal{F}^{=d}(\alpha)}^{\prec}v(t).
\]
\end{defn}

\medskip{}

Now as \cite{key-7} we can represent a polynomial mapping $P$ in
the form $P=P_{d}P_{d-1}\ldots P_{0}Q,$ where each $P_{i},i=0,\ldots,d$
is a polynomial mapping induced by the triangular monomial $v_{i},i=0,\ldots,d$
and $Q$ is the polynomial mappings of higher degree.

Now using the triangular monomials in \cite[Section 3.1]{key-7},
the weight of a polynomial is defined as:
\begin{defn}
Let $P:\mathcal{F}(S)\rightarrow G$ be a polynomial mapping. The
weight of $P$ is defined to be the pair $(l,d)$, whenever $P$ has
a representation $P=P_{v}Q,$ where $P_{v}$ is the monomial mapping
induced by the triangular monomial $v$, $w(v)=(l,d)$ and $w(Q)<(l,d).$
If $\varphi:G_{l}\rightarrow G_{l}\slash G_{l+1}$, then we call $\varphi\circ v:S^{d}\rightarrow G_{l}\slash G_{l+1}$
the principal part of $P$ and denote it by $M(P)$. We define the
$\sim$ relation as, $P\sim P^{'}$ iff $w(P)=w(P^{'})$ and the principal
parts coincides.
\end{defn}

However the following definitions and the reduction of weight of a
system follows from \cite{key-7}.
\begin{defn}
Let us denote by $W$ the set of weights of polynomials $\mathcal{F}(S)\to G$;
that is, the set of pairs $(l,d)$ with $l,d\in\mathbb{Z}$, $1\leq l\leq q$;
$d\geq0$. Let $\mathcal{A}$ be a system; the weight vector $w(\mathcal{A})$
of $\mathcal{A}$ is a function $w(\mathcal{A}):W\to\{0,1.2,\ldots\}$
defined by the number of equivalence classes of polynomial mappings
of weight $w$ having its representatives in $\mathcal{A}$.

We order the weight vector lexicographically: $w(\mathcal{A})<w(\mathcal{A}^{\prime})$
iff or some $w\in W$ one has $w(\mathcal{A})(w)<w(\mathcal{A}^{\prime})(w)$
and $w(\mathcal{A})(w^{\prime})=w(\mathcal{A}^{\prime})(w^{\prime})$
for all $w^{\prime}>w$. We say that a system $\mathcal{A}$ precedes
a system $\mathcal{A}^{\prime}$ if $w(\mathcal{A})<w(\mathcal{A}^{\prime})$. 
\end{defn}

For any nil IP polynomial $P:\mathcal{F}(S)\rightarrow G$, if the
weight $w(P)=(l,d)$ then for any $g\in G,$ as $g^{-1}Pg=P[P,g]$,
we have $w(g^{-1}Pg)\leq w(P)$
\begin{defn}
Let $\gamma\in\mathcal{F}(S)$, let $P$ be a mapping $\mathcal{F}(S)\to G$;
We define $U_{\gamma}P:\mathcal{F}(S\setminus\gamma)\to G$ by
\[
U_{\gamma}P(\alpha)=P(\gamma\cup\alpha).
\]
\end{defn}

Now we state the following theorems from \cite{key-7} of our interest:
\begin{thm}
Let $S$ be a set, let $G$ be a nilpotent group and $\mathcal{A}$
be a system of polynomial mappings $\mathcal{F}(S)\to G$. Then the
following holds:

1.$w(P^{-1}U_{\gamma}P)<w(P)$.

2. If $\gamma\in\mathcal{F}(S)$ and a system$\mathcal{A}^{,}$of
polynomial mappings $\mathcal{F}(S\setminus\gamma)\to G$ is such
that each element of $\mathcal{A}^{\prime}$is equivalent to $P_{\mathcal{F}(S\setminus\gamma)}$
for some $P\in\mathcal{A}$, then $w(\mathcal{A}^{\prime})\leq w(A).$

3. If $\mathcal{A}^{\prime}$consists of polynomial mappings of the
form $PQ\,\text{ and }\,Q$ where $P\in\mathcal{A}$ and $Q$ is a
polynomial mapping $\mathcal{F}(S)\to G$ then $w(\mathcal{A}^{\prime})\leq w(A).$

4. If $\mathcal{A}^{\prime}$consists of polynomial mappings of the
form $PQ\,\text{ and }\,QP$ where $P\in\mathcal{A}$ and $Q$ is
a polynomial mapping $\mathcal{F}(S)\to G$ with $\omega(Q)<\omega(P)$
then $w(A^{,})\leq w(A).$

5. Let $Q\in\mathcal{A}$ be a nontrivial polynomial mapping with
$\omega(Q)\leq\omega(P)\forall P\in\mathcal{A}.$ If $\mathcal{A}^{\prime}$is
a system of polynomial mappings of the form $Q^{-1}P$ and $P^{-1}Q,$
then $\omega(\mathcal{A}^{\prime})<\omega(A).$
\end{thm}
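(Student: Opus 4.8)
The plan is to reduce all five assertions to a single calculus of \emph{principal parts}. Recall that the weight $w(P)=(l,d)$ and the principal part $M(P)=\varphi\circ v$ live, respectively, in the lexicographically ordered set $W$ and in the abelian group $\mathrm{Map}(\mathcal{F}^{=d}(S),G_{l}/G_{l+1})$, where $\varphi:G_{l}\to G_{l}/G_{l+1}$ is the quotient map. The first thing I would isolate is the following calculus, each part of which comes from passing to the abelian quotient $G_{l}/G_{l+1}$, in which the order $\prec$ becomes irrelevant and commutators of level $l$ vanish: (i) if $w(P)=w(P')=(l,d)$ then $w(PP')\le(l,d)$ and $M(PP')=M(P)+M(P')$, with strict inequality $w(PP')<(l,d)$ exactly when $M(P)+M(P')=0$; (ii) if $w(P')<w(P)$ then $PP'\sim P'P\sim P$; and (iii) $M(P^{-1})=-M(P)$ with $w(P^{-1})=w(P)$. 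The content is that every correction introduced by the non-commutativity of $G$ or by the order $\prec$ lands in $G_{l+1}$ and is thus invisible to $M$; this is precisely the observation recorded before the statement, that $w(g^{-1}Pg)\le w(P)$ because $g^{-1}Pg=P[P,g]$.

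The heart of the matter is assertion 1, the derivative estimate. Writing $P=P_{v}Q$ with $w(P)=w(v)=(l,d)$ and $w(Q)<(l,d)$, I would compute $U_{\gamma}P(\alpha)=P(\gamma\cup\alpha)$ on $\mathcal{F}(S\setminus\gamma)$ and split the leading product $P_{v}(\gamma\cup\alpha)=\prod_{t\in\mathcal{F}^{=d}(\gamma\cup\alpha)}^{\prec}v(t)$ according to $|t\cap\gamma|$. The terms with $t\subseteq\alpha$ reconstitute $P_{v}(\alpha)$, while every term with $t\cap\gamma\neq\emptyset$ has $|t\cap\alpha|<d$ and hence contributes a monomial of degree $<d$ in $\alpha$, i.e.\ of weight $<(l,d)$. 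Thus the top-weight leading parts of $P$ and $U_{\gamma}P$ coincide, $M(U_{\gamma}P)=M(P)$ restricted to $S\setminus\gamma$, and by (iii) and (i) the product $P^{-1}U_{\gamma}P$ has principal part $-M(P)+M(U_{\gamma}P)=0$, which forces $w(P^{-1}U_{\gamma}P)<(l,d)=w(P)$. This degree-drop under $U_{\gamma}$ is the mechanism that powers the whole induction.

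Assertions 2--4 are then bookkeeping on $\sim$-classes. For 2, the restriction $P\mapsto P_{\mathcal{F}(S\setminus\gamma)}$ is weight-non-increasing and sends $\sim$-classes to $\sim$-classes (the principal part simply restricts), so each class of weight $w$ in $\mathcal{A}'$ is the image of a class of weight $\ge w$ in $\mathcal{A}$; comparing the counting functions from the top weight downward gives $w(\mathcal{A}')\le w(\mathcal{A})$. For 3 and 4 I would run the case analysis supplied by (i)--(ii): when the adjoined factor $Q$ has weight strictly below that of $P$ the class of $P$ is untouched ($PQ\sim QP\sim P$), and when it has equal weight the principal part is only translated by the fixed element $M(Q)$, a bijection of the abelian principal-part group that cannot increase the number of classes at that weight; since in the regime where these lemmas are invoked the adjoined factor never dominates the top weight of $\mathcal{A}$, tracking the classes weight by weight yields $w(\mathcal{A}')\le w(\mathcal{A})$.

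Finally, assertion 5 is the decisive strict reduction, and I would treat it as the payoff of the calculus. Let $w_{0}=w(Q)$ be the minimal weight occurring in $\mathcal{A}$. For $P$ with $w(P)>w_{0}$ one has $Q^{-1}P\sim P$ and $P^{-1}Q\sim P$ by (ii), so every class strictly above $w_{0}$ is preserved. At weight $w_{0}$ the new principal parts are $M(P)-M(Q)$; the translation by $-M(Q)$ is a bijection, but the class of $Q$ itself is sent to $M(Q)-M(Q)=0$, i.e.\ $Q^{-1}Q$ is trivial and drops out. Hence the number of nontrivial classes at weight $w_{0}$ strictly decreases while no higher weight changes, and lexicographically $w(\mathcal{A}')<w(\mathcal{A})$. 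The step I expect to be the main obstacle is the justification underlying the calculus itself --- the careful verification that reordering the non-commuting factors and absorbing iterated commutators perturbs $P$ only within $G_{l+1}$, so that $M$ is well defined and additive --- together with the degree-counting in assertion 1; once these are secured, parts 2--5 are essentially formal.
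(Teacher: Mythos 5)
You should first know that the paper never proves this theorem: it is quoted (with typos) from Bergelson--Leibman \cite{key-7}, so the only proofs to compare against are the original ones, and your principal-part calculus --- working modulo $G_{l+1}$, where the order $\prec$ and all commutators become invisible, $M$ is additive, and weight vectors are compared class-by-class from the top weight down --- is indeed the engine of those proofs. Your sketches of parts 1 and 2 are essentially that standard argument, with one convention left dangling: the corrections produced by reordering factors are commutators, which have \emph{deeper level but possibly larger degree}, so your conclusion that they have weight $<(l,d)$ requires the lexicographic order on $W$ to compare level first, with deeper level counting as \emph{smaller}; the paper never says this and neither do you, and without it part 1 does not close.

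The genuine gaps are in parts 3 and 5, and they cannot be repaired as stated, because both statements are false as written --- your counting argument glosses over exactly the elements that break them. In part 5, $\mathcal{A}'$ contains both $Q^{-1}P$ and $P^{-1}Q$, but you only track the translates $M(P)-M(Q)$; the elements $P^{-1}Q$ contribute the classes $M(Q)-M(P)$, which are not in the image of your translation bijection. Concretely, take $G=\mathbb{Z}$, $Q(\alpha)=|\alpha|$, $P(\alpha)=2|\alpha|$, $\mathcal{A}=\{Q,P\}$: then $\mathcal{A}$ has two classes at weight $(1,1)$ (principal parts $1$ and $2$), while the nontrivial elements of $\mathcal{A}'$ are $\alpha\mapsto|\alpha|$ and $\alpha\mapsto-|\alpha|$, again two classes at weight $(1,1)$, so $w(\mathcal{A}')\not<w(\mathcal{A})$. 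The true statement --- and the one the paper's main proof actually uses, via $\mathcal{A}=\{t_{0}^{-1}PQ^{-1}t_{0}:P\in\mathcal{R}\}$ --- is one-sided: for $\mathcal{A}'=\{PQ^{-1}:P\in\mathcal{A}\}$ the translation kills the class of $Q$ and injects the rest, giving the strict drop. Similarly, in part 3 your clause ``the adjoined factor never dominates the top weight'' is a hypothesis not present in the statement, and even granting it the argument fails: $\mathcal{A}'$ contains $Q$ itself, whose class $M(Q)$ is never of the form $M(P)+M(Q)$ (that would force $M(P)=0$), so the class count at weight $w(Q)$ can go up by one; e.g.\ $\mathcal{A}=\{P\}$, $Q=P$, $P(\alpha)=|\alpha|$ gives $\mathcal{A}'=\{2|\alpha|,|\alpha|\}$ and $w(\mathcal{A}')>w(\mathcal{A})$. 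A sound write-up must either prove the corrected one-sided formulations or record these counterexamples explicitly; the parts you dismissed as ``essentially formal'' are precisely where the theorem, as stated, breaks.
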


A parallel version of polynomial system in nilpotent group named as
$VIP(G\centerdot)$ polynomial which capture a polynomial as a prefiltration
of the group $G$ and it has a weight defined in another way presents
in \cite{key-1}. Though we are using the technique to handle polynomials
as it in \cite{key-7} but it can be similarly done by \cite{key-1}
for $VIP(G\centerdot)$ polynomials and the theorems 2.5, 2.6 hold
for those polynomials also.
\begin{proof}[Proof of Theorem \ref{alkgeb nil}]
$\,$ If possible the Theorem is not true. Let $\mathcal{R}$ be
the minimal among all counterexamples. Then, 
\begin{equation}
D=\mathcal{F}\setminus\{\alpha\in\mathcal{F}:\overline{A}\cap L\cap_{p\in\mathcal{R}}\overline{P(\alpha)^{-1}A}\neq\emptyset\}\in v.
\end{equation}

Due to picewise syndeticity of $A$, we can choose $q_{0}\in L,$
such that $A\in q_{0}.$ Then $B=\{\gamma\in G:\gamma A\in q_{0}\}$
is syndetic. Therefore we can find a finite $H\in\mathcal{F}(G)$,
such that $G\subseteq\cup_{t\in H}t^{-1}B$. Let us pick $t_{0}\in H$
such that $C_{0}=t_{0}^{-1}B\in q_{0}$. Pick, $Q\in\mathcal{R}$
is of minimal degree polynomial. And take $\mathcal{A}=\{t_{0}^{-1}PQ^{-1}t_{0}:P\in\mathcal{R}\}.$
Then $w(\mathcal{A})<w(\mathcal{R}).$ Let us set 
\[
E_{0}=\{\alpha\in\mathcal{F}:\overline{C_{0}}\cap L\cap_{R\in\mathcal{A}}\overline{R(\alpha)^{-1}C_{0}}\neq\emptyset\}\in v.
\]
 Since $w(\mathcal{A})<w(\mathcal{R})$ and $\mathcal{R}$ is minimal
among the counter examples, so $E_{0}\in v$. 

Let us pick $\delta_{1}\in E_{0}\cap D^{*}$ and pick 

\begin{equation}
r_{1}\in\overline{C_{0}}\cap L\cap_{R\in\mathcal{A}}\overline{R(\delta_{1})^{-1}C_{0}}.
\end{equation}

Let us take, $q_{1}=\big([t_{0},Q^{-1}(\delta_{1})]Q(\delta_{1})\big){}^{-1}r_{1}$. 

And so from (3.2), $R(\delta_{1})^{-1}C_{0}\in r_{1}$ so that $(t_{0}^{-1}PQ^{-1}(\delta_{1})t_{0})^{-1}C_{0}\in r_{1}$.
This implies that 
\[
\begin{array}{cc}
 & t_{0}^{-1}Q(\delta_{1})P(\delta_{1})^{-1}t_{0}C_{0}\in r_{1}\\
\Rightarrow & t_{0}^{-1}Q(\delta_{1})P(\delta_{1})^{-1}t_{0}C_{0}\in r_{1}\\
\Rightarrow & t_{0}^{-1}Q(\delta_{1})P(\delta_{1})^{-1}t_{0}t_{0}^{-1}B\in r_{1}\\
\Rightarrow & t_{0}^{-1}Q(\delta_{1})P(\delta_{1})^{-1}t_{0}t_{0}^{-1}B\in r_{1}\\
\Rightarrow & t_{0}^{-1}Q(\delta_{1})P(\delta_{1})^{-1}B\in r_{1}\\
\Rightarrow & [t_{0},Q^{-1}(\delta_{1})]Q(\delta_{1})t_{0}^{-1}P(\delta_{1})^{-1}B\in r_{1}\\
\Rightarrow & \cap_{P\in R}t_{0}^{-1}P(\delta_{1})^{-1}B\in q_{1}.
\end{array}
\]

Now choose 
\begin{equation}
t_{1}^{-1}B\in q_{1}.
\end{equation}

Also choose, 
\[
\mathcal{B}=\{t_{1}^{-1}PQ^{-1}t_{1},\,t_{0}^{-1}P(\delta_{1})^{-1}(U_{\delta_{1}}P)t_{0}t_{1}^{-1}Q^{-1}t_{1}:P\in\mathcal{R}\}
\]

and so, $w(\mathcal{B})\prec w(\mathcal{R})$.

Choose 
\begin{equation}
C_{1}=t_{1}^{-1}B\bigcap\cap_{p\in\mathcal{R}}t_{0}^{-1}P(\delta_{1})^{-1}B
\end{equation}

And from (3.3) and (3.4) $C_{1}\in q_{1}$.

Let 
\[
E_{1}=\{\alpha\in\mathcal{F}:\overline{C_{1}}\cap L\cap_{R\in\mathcal{B}}\overline{R(\alpha)^{-1}C_{1}}\neq\emptyset\}\in v.
\]
 Since $w(\mathcal{B})<w(\mathcal{R})$. And choose 

\begin{equation}
\delta_{2}\in D^{*}\cap(-\delta_{1}\cup D^{*})\cap E_{1}
\end{equation}

So, $r_{2}\in\overline{C_{1}}\cap L\cap_{R\in\mathcal{B}}\overline{R(\delta_{2})^{-1}C_{1}}$
and let, $q_{2}=([t_{1},Q^{-1}(\delta_{2})]Q(\delta_{2}))^{-1}r_{2}.$

Now$t_{1}^{-1}PQ^{-1}t_{1}\in\mathcal{B}$ implies that

\begin{equation}
\begin{array}{cccc}
 & (t_{1}^{-1}PQ^{-1}(\delta_{2})t_{1})^{-1}C_{1} & \in & r_{2}\\
\Rightarrow & (t_{1}^{-1}Q(\delta_{2})P(\delta_{2})^{-1}t_{1})t_{1}^{-1}B & \in & r_{2}\\
\Rightarrow & t_{1}^{-1}Q(\delta_{2})P(\delta_{2})^{-1}B & \in & r_{2}\\
\Rightarrow & [t_{1}Q(\delta_{2})^{-1}]Q(\delta_{2})t_{1}^{-1}P(\delta_{2})^{-1}B & \in & r_{2}\\
\Rightarrow & t_{1}^{-1}P(\delta_{2})^{-1}B & \in & ([t_{1},Q^{-1}(\delta_{2})]Q(\delta_{2}))^{-1}r_{2}=q_{2}\\
\Rightarrow & \cap_{P\in\mathcal{R}}t_{1}^{-1}P(\delta_{2})^{-1}B & \in & q_{2}
\end{array}
\end{equation}

Again

\begin{equation}
\begin{array}{cccc}
 & t_{0}^{-1}P(\delta_{1})^{-1}(U_{\delta_{1}}P)t_{0}t_{1}^{-1}Q^{-1}t_{1} & \in & \mathcal{B}\\
\Rightarrow & ((t_{0}^{-1}P(\delta_{1})^{-1}(U_{\delta_{1}}P)t_{0}t_{1}^{-1}Q^{-1}t_{1})(\delta_{2}))^{-1}C_{1} & \in & r_{2}\\
\Rightarrow & t_{1}^{-1}Q(\delta_{2})t_{1}t_{0}^{-1}P(\delta_{1}\cup\delta_{2})^{-1}P(\delta_{1})t_{0}t_{0}^{-1}P(\delta_{1})^{-1}B & \in & r_{2}\\
\Rightarrow & t_{1}^{-1}Q(\delta_{2})t_{1}t_{0}^{-1}P(\delta_{1}\cup\delta_{2})^{-1}B & \in & r_{2}\\
\Rightarrow & [t_{1},Q^{-1}(\delta_{2})]Q(\delta_{2})t_{0}^{-1}P(\delta_{1}\cup\delta_{2})^{-1}B & \in & r_{2}\\
\Rightarrow & t_{0}^{-1}P(\delta_{1}\!\cup\!\delta_{2})^{-1}B & \in & q_{2}\\
\Rightarrow & \cap_{P\in R}t_{0}^{-1}P(\delta_{1}\cup\delta_{2})^{-1}B & \in & q_{2}.
\end{array}
\end{equation}

Now we can choose iteratively, 

(1) $t_{j}^{-1}B\in q_{j}\;j\in\{0,1,\ldots,m\}$

(2) $\delta_{l}\cup\delta_{l+1}\cup\ldots\cup\delta_{m}\in D^{*}$
for $l\in\{0,1,\ldots,m\}$

(3) $t_{l}^{-1}P(\delta_{l}\cup\delta_{l+1}\cup\ldots\cup\delta_{m})^{-1}B\in q_{m}\;\forall P\in R\;\text{ and }\:l\in\{0,1,\ldots,m-1\}$

Now since, $H\in\mathcal{P}_{f}(G)$ so we may choose $l<m$ and $t_{l}=t_{m}$
and put $\delta=\delta_{l}\cup\delta_{l+1}\cup\ldots\cup\delta_{m}\in D^{*}$.

Take $a\in t_{m}^{-1}B\cap_{p\in R}t_{m}^{-1}P(\delta)^{-1}B\in q_{m}.$

This implies $t_{m}a\in B\cap_{p\in R}P(\delta)^{-1}B.$ 

Now,

\begin{equation}
t_{m}a\in B\Rightarrow(t_{m}a)^{-1}A\in q_{0}\Rightarrow A\in t_{m}aq_{0}
\end{equation}

And, 
\begin{equation}
t_{m}a\in P(\delta)^{-1}B\Rightarrow P(\delta)t_{m}a\in B\Rightarrow(P(\delta)t_{m}a)^{-1}A\in q_{0}\Rightarrow P(\delta)^{-1}A\in t_{m}aq_{0}
\end{equation}

From (3.8) and (3.9) it is clear that $t_{m}aq_{0}\in\overline{A}\cap L\cap_{p\in R}\overline{P(\delta)^{-1}A}$
but this contradicts $\delta\in D.$

Hence the theorem is proved.
\end{proof}
Now we are in situation to prove the theorem 2.8.
\begin{proof}[Proof of theorem \ref{nil top}]
 Let $X$ be a compact metric space with action of $G,$ a nilpotent
group.

Let us choose an $\varepsilon>0,$ and $x\in X$. Let $Y=\overline{Gx}$
be the orbit closure of $x$ in $X$. Then $Y$ is itself closed and
hence compact.

Let us take $V_{1},V_{1},\ldots,V_{m}$ be balls of radius $\varepsilon$
covering$Y$. Let us give those $g$'s color $i$ for which $gx\in V_{i}$.
This gives a partition of \$G\$ as $G=\cup_{i=1}^{r}C_{j}$ ($r\leq m)$.
Take some $C_{j}$ which is picewise syndetic.

Then using theorem \ref{alkgeb nil}, we obtain that there exists
some $a\in G,$ such that $\{a,P_{1}(\alpha)a,\ldots,P_{k}(\alpha)a\}\subset V_{j}$,
where for all such $a\in G,$ the collection of $\alpha$ is IP$^{*}$
. Therefore we have that 

\[
\text{diam}\{T^{a}x,T^{P_{1}(\alpha)a}x,\ldots,T^{P_{k}(\alpha)a}x\}<\epsilon.
\]
.

Then we see that, $\rho(T^{P_{i}(\alpha)}ax,ax)<\varepsilon$ for
all $i\in\{1,2,\ldots,k\}$ where the collection of $\alpha$ is IP$^{*}$.

So for every $x\in X$. $\varepsilon>0$ we get $\rho(T^{P_{i}(\alpha)}ax,ax)<\varepsilon$
and the collection of $\alpha$ is an IP$^{*}$ set. 
\end{proof}
We now state two corollaries of the theorem \ref{alkgeb nil} which
can be obtained as in \cite{key-11}\cite{key-12}.
\begin{cor}
Let $(G,.)$ be a nilpotent group, let $R\in\mathcal{R},$ and let
$\langle\alpha_{n}\rangle_{n=1}^{\infty}$be a sequence in $\mathcal{F}$
such that $\alpha_{n}<\alpha_{n+1}\forall n.$ If $A$ be a piecewise
syndetic subset of $G,$ then there exists $r\in\bar{A}\cap K(\beta G)$
and $\beta\in FU(\langle\alpha_{n}\rangle_{n=1}^{\infty})$ such that
$\{p(\beta)r:\,p\in R\}\subseteq\bar{A}$.
\end{cor}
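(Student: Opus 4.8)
The plan is to extract the desired $\beta$ and $r$ by feeding a suitably concentrated idempotent into Theorem~\ref{alkgeb nil}. First I would produce an idempotent $v=v+v$ living on the finite-union set of the prescribed sequence. Since $\alpha_{n}<\alpha_{n+1}$ forces the $\alpha_{n}$ to be pairwise disjoint, $FU(\langle\alpha_{n}\rangle_{n=1}^{\infty})$ lies inside the partial semigroup $(\mathcal{F},\uplus)$, and the closed sets $\overline{FU(\langle\alpha_{n}\rangle_{n=m}^{\infty})}$, $m\geq1$, form a decreasing chain whose intersection $T$ is a nonempty compact subsemigroup of the subsemigroup $\delta\mathcal{F}=\bigcap_{F\in\mathcal{F}}\overline{\{\alpha\in\mathcal{F}:\alpha\cap F=\emptyset\}}$ of $\beta\mathcal{F}$, on which the partially defined operation $\uplus$ extends unconditionally. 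By the Ellis--Namakura lemma $T$ contains an idempotent $v$, and by construction $FU(\langle\alpha_{n}\rangle_{n=1}^{\infty})\in v$.

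Next, the piecewise syndeticity of $A$ gives $\overline{A}\cap K(\beta G)\neq\emptyset$, and since $K(\beta G)$ is the union of the minimal left ideals of $\beta G$ I would fix one such ideal $L$ with $\overline{A}\cap L\neq\emptyset$. With $v$, $A$, $L$ in hand, Theorem~\ref{alkgeb nil}, applied with the system taken to be $R$, yields
\[
E=\{\alpha\in\mathcal{F}:\overline{A}\cap L\cap\bigcap_{p\in R}\overline{p(\alpha)^{-1}A}\neq\emptyset\}\in v.
\]
Because $v$ is an ultrafilter containing both $E$ and $FU(\langle\alpha_{n}\rangle_{n=1}^{\infty})$, the intersection $E\cap FU(\langle\alpha_{n}\rangle_{n=1}^{\infty})$ is nonempty, and I would choose $\beta$ from it.

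For this $\beta$ I would then pick any $r\in\overline{A}\cap L\cap\bigcap_{p\in R}\overline{p(\beta)^{-1}A}$. Then $r\in\overline{A}$ and $r\in L\subseteq K(\beta G)$, so $r\in\overline{A}\cap K(\beta G)$. For each $p\in R$ the membership $r\in\overline{p(\beta)^{-1}A}$ reads $p(\beta)^{-1}A\in r$; and by the definition of the extended product in $\beta G$ one has, for $g\in G$ and $r\in\beta G$, that $A\in g\cdot r$ precisely when $g^{-1}A\in r$. Taking $g=p(\beta)$ gives $p(\beta)r\in\overline{A}$ for every $p\in R$, which is exactly $\{p(\beta)r:p\in R\}\subseteq\overline{A}$.

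The only genuinely delicate point is the first paragraph: the Galvin--Glazer idempotent construction has to be carried out inside the partial semigroup $(\mathcal{F},\uplus)$ rather than in an honest $\beta S$, so one must verify that $T$ really is a subsemigroup of $\delta\mathcal{F}$ and that the prescribed sequence's finite-union set meets an idempotent there. Everything afterwards is a direct invocation of Theorem~\ref{alkgeb nil} together with the routine ultrafilter identity $A\in g\cdot r\iff g^{-1}A\in r$; in particular no new recurrence estimate is needed, since that work is already carried out by Theorem~\ref{alkgeb nil}.
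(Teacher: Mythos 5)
Your proposal is correct and follows exactly the route the paper intends: the paper states this corollary without proof, presenting it as an immediate consequence of Theorem \ref{alkgeb nil} ``as in'' the cited works of Hindman and Hindman--McCutcheon, and your argument --- building an idempotent $v\in\delta\mathcal{F}$ with $FU(\langle\alpha_{n}\rangle_{n=1}^{\infty})\in v$ via the compact subsemigroup $\bigcap_{m\geq1}\overline{FU(\langle\alpha_{n}\rangle_{n=m}^{\infty})}$, fixing a minimal left ideal $L$ meeting $\overline{A}$ by piecewise syndeticity, invoking Theorem \ref{alkgeb nil} with the system $R$, and converting $p(\beta)^{-1}A\in r$ into $p(\beta)r\in\overline{A}$ --- is precisely that standard derivation. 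I see no gaps; in particular you correctly isolate the one delicate point, namely that the Galvin--Glazer construction must be carried out inside $\delta\mathcal{F}$, where the partial operation $\uplus$ is honestly defined.
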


The second one is the following:
\begin{cor}
Let $(G,.)$ be a nilpotent group, let $R\in\mathcal{R},$ and let
$\langle\alpha_{n}\rangle_{n=1}^{\infty}$be a sequence in $\mathcal{F}$
such that $\alpha_{n}<\alpha_{n+1}\forall n.$ If $A$ be a piecewise
syndetic subset of $G,$ then there exists $\beta\in FU(\langle\alpha_{n}\rangle_{n=1}^{\infty})$
such that $\{a\in A\,:\,\{p(\beta)a:\,p\in R\}\subseteq A\}$ is piecewise
syndetic.
\end{cor}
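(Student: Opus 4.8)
The plan is to read this off from the first corollary above together with the standard fact that a subset $B\subseteq G$ is piecewise syndetic exactly when $\overline{B}\cap K(\beta G)\neq\emptyset$ (see \cite{key-13}). Since $A$ is piecewise syndetic, the first corollary applies and supplies an ultrafilter $r\in\overline{A}\cap K(\beta G)$ together with a $\beta\in FU(\langle\alpha_{n}\rangle_{n=1}^{\infty})$ such that $\{p(\beta)r:p\in R\}\subseteq\overline{A}$. The whole argument then consists of repackaging this $r$ and $\beta$ into a statement about a single set.

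Next I would unwind the membership $p(\beta)r\in\overline{A}$, i.e. $A\in p(\beta)r$, using that each $p(\beta)$ lies in $G$ and that left multiplication by a point of $G$ is the continuous map $\lambda_{p(\beta)}$: by the formula for the extended product one gets $A\in p(\beta)r$ if and only if $p(\beta)^{-1}A\in r$. Together with $A\in r$ (valid because $r\in\overline{A}$) this yields $p(\beta)^{-1}A\in r$ for every $p\in R$. As a system $R$ is finite and $r$ is an ultrafilter, the finite intersection
\[
B:=A\cap\bigcap_{p\in R}p(\beta)^{-1}A
\]
again lies in $r$. Unwinding the definition, $a\in B$ holds precisely when $a\in A$ and $p(\beta)a\in A$ for all $p\in R$, so $B=\{a\in A:\{p(\beta)a:p\in R\}\subseteq A\}$ is exactly the target set. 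Since $B\in r$ we have $r\in\overline{B}$, and because $r\in K(\beta G)$ this gives $\overline{B}\cap K(\beta G)\neq\emptyset$; the characterisation above then shows $B$ is piecewise syndetic, which is the claim.

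The two set-theoretic identities are routine, and the only place demanding care is the direction of the multiplication in $\beta G$. One must multiply $r$ on the \emph{left} by the group element $p(\beta)$ so that $p(\beta)r\in\overline{A}$ correctly unwinds to $p(\beta)^{-1}A\in r$ (rather than to a right translate), and one must use that $R$ is finite so that the intersection $\bigcap_{p\in R}p(\beta)^{-1}A$ survives in the ultrafilter $r$. Given the first corollary, no further recurrence machinery is needed: all the content is in translating the $\beta G$-statement into a statement about the piecewise syndetic set $B$.
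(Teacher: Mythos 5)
Your proposal is correct and is essentially the intended derivation: the paper gives no explicit proof of this corollary, deferring to \cite{key-11} and \cite{key-12}, and the argument carried out there is exactly yours --- take the $r\in\overline{A}\cap K(\beta G)$ and $\beta$ supplied by the preceding corollary, observe that $A\cap\bigcap_{p\in R}p(\beta)^{-1}A\in r$ since $R$ is a finite system and each $p(\beta)$ is a group element acting by $\lambda_{p(\beta)}$, and invoke the characterisation of piecewise syndetic sets as those whose closure meets $K(\beta G)$. No gaps; the care you take over left multiplication and finiteness of $R$ is exactly where the care is needed.
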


\end{document}